\documentclass[12pt, reqno]{amsart}
\usepackage{amsmath, amsthm, amscd, amsfonts, amssymb, graphicx, color, mathrsfs}
\usepackage[bookmarksnumbered, colorlinks, plainpages]{hyperref}
\hypersetup{colorlinks=true,linkcolor=red, anchorcolor=green, citecolor=cyan, urlcolor=red, filecolor=magenta, pdftoolbar=true}

\let\<=\langle
\let\>=\rangle

\def\smallfrac#1#2{{\textstyle{\frac{#1}{#2}}}}

\def\M{{\mathcal M}}
\def\H{{\mathcal H}}

\def\BB{{\mathbb B\kern.5pt}}
\def\CC{{\mathbb C\kern.5pt}}
\def\NN{{\mathbb N\kern.5pt}}
\def\MM{{\mathbb M\kern.5pt}}
\def\RR{{\mathbb R\kern.5pt}}
\def\ZZ{{\mathbb Z\kern.5pt}}

\newtheorem{theorem}{Theorem}[section]
\newtheorem{lemma}[theorem]{Lemma}
\newtheorem{proposition}[theorem]{Proposition}
\newtheorem{corollary}[theorem]{Corollary}
\theoremstyle{definition}
\newtheorem{definition}[theorem]{Definition}

\theoremstyle{remark}
\newtheorem{remark}[theorem]{Remark}
\numberwithin{equation}{section}

\begin{document}
\title[On the Commutant of Invertible Positive Operators]{On the Commutant of Invertible Positive Operators}
\author[F. Kittaneh]{Fuad Kittaneh$^{1.2}$}
\address{$^{1}$Department of Mathematics, The University of Jordan, Amman, Jordan\newline
\indent $^2$ Department of Mathematics, Korea University, Seoul 02841, South Korea}
\email{fkitt@ju.edu.jo}

\author[C.S. Kubrusly]{Carlos S. Kubrusly$^3$}
\address{$^3$Catholic University of Rio de Janeiro, Rio de Janeiro, Brazil}
\email{carlos@ele.puc-rio.br}

\author[M.S. Moslehian]{Mohammad Sal Moslehian$^4$}
\address{$^4$Department of Pure Mathematics, Faculty of Mathematical Sciences, Ferdowsi University of Mashhad, P. O. Box 1159, Mashhad 91775, Iran}
\email{moslehian@um.ac.ir; moslehian@yahoo.com; msmoslehian@gmail.com}

\subjclass{47A30, 15A60, 47B15.}
\keywords{Boundedness; conjugate orbit; monotone class theorem; spectral gap; spectral theorem}

\begin{abstract}
An important result due to Radjabalipour asserts that if ${A\in\BB(\H)}$ is an invertible positive operator and ${T\in\BB(\H)}$ is arbitrary, then the operator sequence $\{A^{-n}TA^n\}_{n\in\ZZ}$ is norm-bounded if and only if ${TA=AT}.$ Using the concept of spectral gaps, we show that if $A$ possesses a spectral gap and the sequence
$\{A^{-n}TA^n\}_{n\in\ZZ}$ is norm-bounded, then $T$ must be block diagonal with respect to a certain orthogonal decomposition of $\H$. We also provide an alternative proof of Radjabalipour's theorem. We also extend the theorem to some rich classes of operators, including invertible normal operators and invertible weighted sums of projections. Moreover, we provide simple proofs for the fact that a positive definite matrix ${A\in\MM_N}$ is a positive multiple of the identity if and only if the sequence $\{A^{-n}TA^n\}_{n\in\ZZ}$ is norm-bounded for every~${T\in\MM_N}.$
\end{abstract}

\maketitle

\section{Introduction}
Radjabalipour \cite[Corollary 3]{RAD} has investigated Foia\c{s}' proof
\cite{FOI} of Arveson's theorem \cite{ARV} and established the following
theorem.

\begin{theorem}{$\!($}Radjabalipour{$)\!$}\label{thm:1.1}
Let $\kern-1ptA$ be an invertible positive operator on a Hilbert space $\H.$
Then an operator ${T\in\BB(\H)}$ commutes with $A$ if and only if
\[
{\sup}_{n\in\mathbb{Z}}\|A^{-n}TA^n\|<\infty.
\]
\end{theorem}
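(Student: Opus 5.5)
The trivial direction is immediate. If $TA=AT$, then $A^{-n}TA^n=T$ for every $n\in\ZZ$, so the supremum equals $\|T\|$.

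For the converse, assume $M:={\sup}_{n\in\ZZ}\|A^{-n}TA^n\|<\infty$. The plan is to embed the discrete orbit into an entire function and apply Liouville's theorem.

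\textbf{Step 1 (complex powers).} Since $A$ is invertible and positive, its spectrum lies in $[a,b]$ with $0<a\le b$. Hence $\log A$ is a bounded self-adjoint operator, and $A^{z}=e^{z\log A}$ is an entire operator-valued function. For real $t$, $A^{it}$ is unitary, and $\|A^{z}\|=\|A^{\operatorname{Re} z}\|$. Define
\[
F(z)=A^{-z}TA^{z},\qquad z\in\CC .
\]
This is an entire $\BB(\H)$-valued function, and $F(n)=A^{-n}TA^n$.

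\textbf{Step 2 (bound on vertical strips).} Write $z=x+iy$. Unitarity of $A^{iy}$ gives $\|F(x+iy)\|=\|A^{-x}TA^{x}\|$. So it suffices to bound $g(x)=\|A^{-x}TA^{x}\|$ for real $x$.

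For $x\in[n,n+1]$, write $x=n+s$ with $s\in[0,1]$. Then
\[
A^{-x}TA^{x}=A^{-s}\bigl(A^{-n}TA^{n}\bigr)A^{s},
\]
so $g(x)\le \|A^{-s}\|\,\|A^{s}\|\,M\le \max(1,a^{-1})\max(1,b)\,M$.

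Thus $F$ is a bounded entire function on all of $\CC$. An alternative route, if one wants to avoid the crude interval estimate, is Hadamard three-lines applied to $\langle F(z)u,v\rangle$ on each strip $n\le\operatorname{Re}z\le n+1$, after an exponential-type growth check. The direct estimate above already suffices.

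\textbf{Step 3 (Liouville).} For fixed $u,v\in\H$, the scalar function $\langle F(z)u,v\rangle$ is bounded and entire, hence constant. Therefore $F(z)=F(0)=T$ for all $z$. In particular $A^{-1}TA=F(1)=T$, i.e. $TA=AT$.

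The main obstacle is conceptual rather than computational: one has to realize that boundedness on the integers forces boundedness on the whole plane. This rests on two facts. The imaginary direction contributes only unitary factors, because $A$ is positive. The real direction interpolates between consecutive integers with a uniform constant, because $A$ is invertible, so $\|A^{\pm s}\|$ is uniformly bounded for $s\in[0,1]$.

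One routine point needs checking: $z\mapsto A^{z}$ is norm-analytic. This follows from the norm-convergent power series of $e^{z\log A}$.
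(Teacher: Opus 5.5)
Your proof is correct, and it takes a genuinely different route from the paper's. The paper's alternative proof (Theorem~\ref{th:bounded-orbits}) uses the spectral measure $E$ of $A$. For Borel sets $\Omega_1,\Omega_2$ with $\beta=\sup\Omega_1<\alpha=\inf\Omega_2$, it factors $E(\Omega_2)TE(\Omega_1)=E(\Omega_2)A^{-n}(A^nTA^{-n})A^nE(\Omega_1)$ to get the decay $\|E(\Omega_2)TE(\Omega_1)\|\le M(\beta/\alpha)^n$, and it treats the other corner symmetrically. It then passes to SOT limits to show that $T$ commutes with the half-line projections $E((-\infty,t]\cap\sigma(A))$. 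The monotone class theorem extends this to all Borel sets, and the spectral integral gives $TA=AT$.

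You instead interpolate the orbit analytically. Positivity makes $A^{iy}$ unitary, so $\|F(x+iy)\|$ depends only on $x$. Invertibility lets you pass from the integers to all real $x$ at the cost of the uniform constant $\max(1,a^{-1})\max(1,b)$. Liouville's theorem then does in one stroke what the paper does through spectral projections, SOT limits and measure theory.

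What your route buys: it needs only the continuous functional calculus (to define $\log A$) and scalar Liouville. It also adapts verbatim to the intertwining setting $F(z)=A^{-z}TB^{z}$ with $A,B$ positive invertible, giving $AT=TB$ directly without the $2\times 2$ matrix device.

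What the paper's route buys: localized, quantitative information, namely the geometric decay of each off-diagonal compression between spectrally separated pieces. This is exactly what drives the spectral-gap block-diagonality result (Theorem~\ref{th:4.3}) and the theorem on weighted sums of projections. It also stays closer to Radjabalipour's original spectral-decomposition strategy.
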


The original proof in \cite{RAD} is based on an investigation of the behaviour
of the iterates of the net (a bisequence, actually) ${\|A^{-n}TA^n\|}$ as $n$
runs over all integers~$\mathbb{Z}.$

Note that the above result does not hold if the set of all integers $\ZZ$ is
replaced with the set of positive integers $\NN.$ Indeed,
if
${A=\begin{bmatrix} 1 & 0 \\ 0 & 2 \end{bmatrix}}$
and
${T=\begin{bmatrix} 0 & 0 \\ 1 & 0 \end{bmatrix}}\!,$
$ $
then ${\sup_{n\in\NN}\|A^{-n}TA^n\|<\infty}$ and ${AT\ne TA}$.

\vskip6pt
$\!$The study of operators $T$ for which the two-sided \emph{conjugation orbit}
$\{A^{-n}TA^n\!\}_{n\in\NN}$ is bounded was initiated by Deddens \cite{DM}.\
Conjugation orbits have been investigated in several contexts, including
operator algebras, similarity theory, and \hbox{invariant}~sub\-spaces; see,
e.g., \cite{DM,FOI,RAD}.\ They are also closely
related~to~\hbox{dynamical}~proper\-ties of inner automorphisms of $\BB(\H).$
Indeed, if one considers the map ${\Phi_A(T)=A^{-1}TA}$, then
${\Phi_A^{\,n}(T)=A^{-n}TA^n\!}$, so that the sequence $\{A^{-n}TA^n\!\}$ may
be viewed as the~\hbox{orbit} of $T$ under the discrete dynamical system
generated by $\Phi_A$. Moreover, the commutation relation ${A\,T\!=\!TA}$ is
equivalent to the fixed-point equation ${\Phi_A(T)=T}.$~Questions concerning
the boundedness and asymptotic behaviour of such orbits are often subtle and
strongly dependent on the spectral and geometric properties of the
implementing operator $A.$

Observe that ${\|A^{-n}TA^n\|\le\|A^{-n}\|\,\|T\|\,\|A^n\|}$ for every
${n\in\ZZ}.$ Thus, Theorem~\ref{thm:1.1} gives still another proof that the
identity is the unique power-bounded positive operator with a power-bounded
inverse.

\begin{corollary}\label{corcar}
The identity is the unique power-bounded positive operator with a
power-bounded inverse.
\end{corollary}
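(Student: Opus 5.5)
Let $A$ be positive with $\sup_{n\ge 0}\|A^n\|<\infty$ and $\sup_{n\ge0}\|A^{-n}\|<\infty$. We want $A=I$; the converse is trivial, since $I$ is positive, invertible and has powers $I$. Following the remark preceding the statement, the plan is to deduce this from Theorem~\ref{thm:1.1}. First, positivity and power-boundedness of $A$ force invertibility: if $A^{-1}$ exists as a bounded operator with bounded powers, then $A$ is an invertible positive operator. This is the setting of Theorem~\ref{thm:1.1}.

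Set $M=\sup_{n\in\ZZ}\|A^n\|$. This is finite because negative powers are powers of $A^{-1}$. For any $T\in\BB(\H)$ and any $n\in\ZZ$,
\[
\|A^{-n}TA^n\|\le\|A^{-n}\|\,\|T\|\,\|A^n\|\le M^2\|T\|.
\]
So the conjugation orbit of every $T$ is bounded, and Theorem~\ref{thm:1.1} gives $TA=AT$ for every $T\in\BB(\H)$. Hence $A$ lies in the center of $\BB(\H)$, which consists of the scalar multiples of the identity. To justify this, test against the rank-one operators $x\otimes y$, given by $z\mapsto\langle z,y\rangle x$. Commutation gives $\langle z,y\rangle Ax=\langle Az,y\rangle x$ for all $z$, so $Ax$ is a multiple of $x$ for every $x$. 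An operator for which every vector is an eigenvector is scalar, so $A=\lambda I$. Since $A$ is positive and invertible, $\lambda>0$.

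Finally, $\|A^n\|=\lambda^n$ and $\|A^{-n}\|=\lambda^{-n}$. Both stay bounded as $n\to\infty$ only if $\lambda\le1$ and $\lambda\ge1$, so $\lambda=1$ and $A=I$.

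There is no real obstacle; the only point needing care is the standard fact that the commutant of $\BB(\H)$ is trivial, and the rank-one argument above handles it. As an independent check that avoids Theorem~\ref{thm:1.1}, one could argue spectrally. By the spectral theorem, $\|A^n\|=r(A)^n$ and $\|A^{-n}\|=r(A^{-1})^n$. Boundedness forces $\sigma(A)\subseteq[1,1]$, so $\sigma(A)=\{1\}$, and for a positive (self-adjoint) operator this means $A=I$.
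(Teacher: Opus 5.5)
Your proof is correct and is essentially the paper's argument: you bound every conjugation orbit by $\sup_{n}\|A^{-n}\|\,\|T\|\,\sup_{n}\|A^{n}\|$, apply Theorem~\ref{thm:1.1} to place $A$ in the commutant of $\BB(\H)$, which is $\CC I$ (your rank-one argument supplies the detail the paper takes as known), and then use power-boundedness of $A$ and $A^{-1}$ to force $A=I$; your spectral check is the paper's Remark. One small slip: positivity and power-boundedness alone do not ``force invertibility'' (consider $A=0$), but invertibility is part of the hypothesis, so nothing is lost.
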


In fact, let ${T\in\BB(H)}$ be an arbitrary operator.\ Since both
$\{\|A^n\|\}_{n \in\ZZ}$ and $\{\|A^{-n}\|\}_{n \in\ZZ}$ are bounded, the
boundedness of $\{A^{-n}T A^n\}_{n \in\ZZ}$ implies that $T$ commutes with
$A.$ Therefore, $A$ belongs to the commutant of $\BB(\H)$, which is ${\CC\,I}.$
From the power-boundedness of $A$ and $A^{-1}$, we conclude that ${A=I}$.

\begin{remark}
A classical proof of Corollary \ref{corcar} can be obtained by using the
spectral radius:\ Assume $A$ is a positive invertible operator on a Hilbert
space $\H$ such that both $A$ and $A^{-1}$ are power-bounded. Then
\[
r(A) = \lim_{n \to \infty} \|A^n\|^{1/n}
\leq \lim_{n \to \infty} \bigl
(\sup_{n\in \mathbb{Z}}\|A^n\|\bigr)^{1/n} = 1.
\]
Similarly, $r(A^{-1}) \leq 1.$ In addition,
\[
1 = r(I) = r(AA^{-1}) \leq r(A) \, r(A^{-1}) \leq 1,
\]
using the submultiplicativity of the spectral radius, which holds for
commuting operators.\ Since $A$ is positive, its spectral radius coincides
with its operator norm, we have ${A\le\|A\|I=r(A)I=I}$ and 
${A^{-1}\le\|A^{-1}\|I=r(A^{-1})I=I}.$ Thus, ${A=I}$.
\end{remark}

In general, the quantity $\sup_{n\in\ZZ}\|A^{-n}\|\,\|A^n\|$ may be
unbounded.\ For instance, if ${A=\operatorname{diag}(r,r^{-1})}$, where
${r>1}$, then ${\|A^{-n}\|\,\|A^n\|=r^{2n}\to\infty}$ as ${n\to\infty}$.

Moreover, the conjugation orbit itself may fail to be bounded. Consider
$A=\begin{bmatrix} 1 & 1 \\ 0 & 1 \end{bmatrix}$
and
${T=\begin{bmatrix} 0 & 0 \\ 1 & 0 \end{bmatrix}}.$
Since
${A^n=\begin{bmatrix} 1 & n \\ 0 & 1 \end{bmatrix}}$
for all ${n\in\ZZ}$, we get
\[
\sup_{n\in\mathbb{Z}}\|A^{-n}TA^n\|
= \sup_{n\in\mathbb{Z}}\left\|\begin{bmatrix}-n & -n^2 \\
1 & n\end{bmatrix}\right\| = \sup_{n\in\mathbb{Z}} (n^2+1)=\infty.
\]
Thus, boundedness of conjugation orbits is a genuinely nontrivial phenomenon.

Still along the lines of the previous paragraph, it is also worth noticing
that an immediate consequence of the so-called Corach--Porta--Recht inequality
\cite{CPR} (see also \cite{CMS, KIT} and the references therein) asserts that
if $A$ is invertible and self-adjoint,~then
\[
\|A^{-n}TA^n+A^nTA^{-n}\| \geq 2\|T\|, \qquad n\in\mathbb Z.
\]
Nevertheless, no analogous lower bound holds for the individual terms
${\|A^{-n}TA^n\|}.$ Indeed, if
${A=\!\begin{bmatrix} 1 & 0 \\ 0 & 2 \end{bmatrix}\!}$
and
${T\kern-1pt=\!\begin{bmatrix} 0 & 1 \\ 0 & 0 \end{bmatrix}\!},$
then
${A^{-n}TA^n\!=\!\begin{bmatrix} 0 & 2^n \\ 0 & 0 \end{bmatrix}\!}$,
so that ${\|A^{-n}TA^n\|}$ $=2^n$ for all ${n\in\ZZ}.$ Consequently,
\[
\inf_{n\in\mathbb Z}\|A^{-n}TA^n\|
= \inf_{n\in\mathbb Z} 2^n = 0 \not\geq 1 = \|T\|.
\]

Notation and terminology used in this note are standard, and, in general, they
follow those in \cite{CON, FUR, KUB, MUR} for the theory of operators on
Hilbert spaces, and in \cite{BHA, ZHANG} for the theory of matrices. Let
$\BB(\H)$ denote the $C^*$-algebra of all bounded linear operators on a complex
Hilbert space $(\H,\langle\cdot,\cdot\rangle)$ endowed with the operator norm
$\|\cdot\|.$ Capital letters are used to denote operators or matrices.\ In
particular, $I$ and, when convenient, $I_{\H}$ denote the identity operator on
$\H.$ The notation $\oplus$ stands for orthogonal direct sum, either of closed
subspaces or of operators, as usual.\ The spectrum and the spectral radius of
an operator $S$ will be denoted by $\sigma(S)$ and $r(S)$, respectively.\
Recall that a closed subspace $\M$ of a Hilbert space $\H$ reduces an operator
${S\in\BB(\H)}$ if ${S(\M)\subseteq\M}$ and ${S^*(M)\subseteq\M}.$ In this
case, the restriction $S|_\M$ of $S$ to $\M$ can be regarded as an operator in
$\BB(\M)$, which is denoted by $S_i$.

The paper is organised as follows.\ In Section \ref{sec:programme}, we present some interesting results concerning spectral gaps.\ $\!$We show that if $A$ possesses spectral gaps and the sequence $\{A^{-n}TA^n\}_{n\in\ZZ}$ is norm-bounded, then $T$ is block diagonal with respect to a suitable orthogonal decomposition. We also prvide an alternative proof of Radjabalipour's theorem.\ In Section \ref{sec:extension}, we extend Radjabalipour's theorem to certain rich classes of operators, including invertible normal operators and invertible weighted sums of projections.\ Moreover, we prove a variant of Radjabalipour's theorem for matrices.

\section{Spectral gap and block representation}\label{sec:programme}

We begin this section with following key definition.

\begin{definition}\label{def:4.1}
Let ${A\in\mathbb{B}(\H)}$ be a self-adjoint operator.\ We say that $A$ has a
{\it spectral gap} if there exist disjoint nonempty closed sets
${\Delta_1,\Delta_2\subseteq\sigma(A)\subset\RR}$ such that
${\sigma(A)=\Delta_1\cup\Delta_2}$ and ${\sup\Delta_1<\inf\Delta_2}$.
\end{definition}

Although the following fact is likely known in the literature, we have been
unable to find any reference for it. We therefore prove it for the sake of
convenience.

\begin{lemma}\label{lem:4.2}
Let ${A\in\BB(\H)}$ be self-adjoint.\ Then the following are equivalent.
\begin{itemize}
\item[\rm(i)\,]
$A$ has a spectral gap;
\item[\rm(ii)]
there exist nonzero orthogonal reducing closed subspaces ${\H=\H_1\oplus\H_2}$
and real numbers ${\alpha\kern-1pt<\kern-1pt\beta}$ such that
${A_1\!\le\alpha I_{\H_1}}\!$ and ${A_2\!\ge\beta I_{H_2}},\!$ where
$A_j\kern-1pt$ is the~restric\-tion of $A$ to $\H_j\kern-1pt$ for ${j=1,2}$.
\end{itemize}
\end{lemma}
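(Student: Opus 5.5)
We prove the two implications separately, using the spectral theorem (continuous or Borel functional calculus) for the self-adjoint operator $A$.

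For (i)$\Rightarrow$(ii), let $\Delta_1,\Delta_2$ be as in Definition~\ref{def:4.1}, and set $\alpha=\sup\Delta_1$ and $\beta=\inf\Delta_2$, so that $\alpha<\beta$. Since $\Delta_1$ and $\Delta_2$ are disjoint closed subsets whose union is $\sigma(A)$, each is relatively open in $\sigma(A)$. Hence the characteristic function $\chi_{\Delta_1}$ is continuous on $\sigma(A)$, and $P=\chi_{\Delta_1}(A)$ is an orthogonal projection commuting with $A$. (The Borel spectral measure $E(\Delta_1)$ gives the same projection.) Put $\H_1=P\H$ and $\H_2=(I-P)\H$. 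Both subspaces reduce $A$ because $P$ commutes with $A$.

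Next we identify the restrictions. By the spectral mapping for the functional calculus, $\sigma(A_1)=\Delta_1$ and $\sigma(A_2)=\Delta_2$. Concretely, $A_1$ is $f(A)$ restricted to $\H_1$, where $f=t\chi_{\Delta_1}$, and the spectrum of the compressed operator is the support of the spectral measure restricted to $\Delta_1$. Since $A_j$ is self-adjoint, $\sigma(A_1)\subseteq(-\infty,\alpha]$ gives $A_1\le\alpha I_{\H_1}$, and $\sigma(A_2)\subseteq[\beta,\infty)$ gives $A_2\ge\beta I_{\H_2}$.

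It remains to show that the subspaces are nonzero. If $P=0$, then $\chi_{\Delta_1}$ vanishes on $\sigma(A)$ (the functional calculus is isometric on $C(\sigma(A))$), which contradicts $\Delta_1\neq\emptyset$. The same argument with $I-P$ shows $\H_2\neq0$.

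For (ii)$\Rightarrow$(i), with respect to $\H=\H_1\oplus\H_2$ we have $A=A_1\oplus A_2$, hence $\sigma(A)=\sigma(A_1)\cup\sigma(A_2)$. Set $\Delta_j=\sigma(A_j)$. Each $\Delta_j$ is compact, and nonempty because $\H_j\neq0$ and spectra of operators on nonzero spaces are nonempty. The bound $A_1\le\alpha I$ gives $\sup\Delta_1\le\alpha$, and $A_2\ge\beta I$ gives $\inf\Delta_2\ge\beta$. Therefore $\sup\Delta_1<\inf\Delta_2$, and in particular $\Delta_1$ and $\Delta_2$ are disjoint.

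The only step needing care is the claim $\sigma(A|_{E(\Delta)\H})\subseteq\overline{\Delta}$ in the first implication. I would justify it directly. For $\lambda\notin\Delta_1$, the function $(t-\lambda)^{-1}\chi_{\Delta_1}(t)$ is continuous on $\sigma(A)$, since $\Delta_1$ is compact and avoids $\lambda$. Applying the functional calculus to it yields an inverse of $A_1-\lambda I_{\H_1}$ on $\H_1$. The argument for $\Delta_2$ is symmetric.
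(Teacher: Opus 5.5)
Your proposal is correct and follows essentially the same route as the paper: the spectral (Riesz) projection $\chi_{\Delta_1}(A)$ splits $\H=\H_1\oplus\H_2$, the inclusion $\sigma(A_j)\subseteq\Delta_j$ together with the functional calculus gives the order bounds, and $\sigma(A_1\oplus A_2)=\sigma(A_1)\cup\sigma(A_2)$ gives the converse. Your resolvent check of $\sigma(A_1)\subseteq\Delta_1$ and your functional-calculus derivation of $A_1\le\alpha I_{\H_1}$ are the sound version of that step. The paper's estimate $\|A_1x\|\le\alpha\|x\|$ only works when $\sigma(A_1)\subseteq[-\alpha,\alpha]$, e.g.\ for positive $A$ with $\alpha>0$. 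Your verification that $\H_j\neq0$ and $\Delta_j\neq\emptyset$ also fills in details the paper leaves implicit.
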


\begin{proof}
(i)$\Rightarrow$(ii).
Suppose ${\sigma(A)=\Delta_1\cup\Delta_2}$, where ${\Delta_1,\Delta_2}$ are
disjoint nonempty closed sets such that ${\sup\Delta_1\!<\inf\Delta_2}.$
Take real numbers ${\alpha,\beta}$ with ${\alpha\!<\!\beta}$ for which
$\Delta_1\!\subset\!(-\infty,\alpha]$ and
${\Delta_2\!\subset\![\beta,\infty)}.$ By the Riesz decomposition theorem and the spectral theorem, ${\H=\H_1\oplus\H_2}$,
where $\H_j$ are reducing closed subspaces corresponding to the spectral sets
$\Delta_j$ and ${\sigma(A_j)=\Delta_j}.$ We have
\[
\|A_1x\|^2=\langle A_1^2 x,x\rangle\le\alpha^2\|x\|^2
\]
since $A_1^2$ is positive and bounded. Hence,
\[
\langle A_1 x,x\rangle\le\|A_1x\|\,\|x\|\le\alpha\langle x,x\rangle,
\]
whence $A_1\le\alpha I_{\H_1}.$ Of course, this inequality can be directly established by using functional calculus since $\sigma(A_1) \subset (-\infty,\alpha]$. A similar argument leads to
${\beta\,I_{\H_2}\leq A_2}.$

(ii)$\Rightarrow$(i).
Since $\H_1$ and $\H_2$ reduce $A$, we have ${A=A_1\oplus A_2}$, and therefore
$\sigma(A)={\sigma(A_1)\cup\sigma(A_2)}.$ Moreover,
${\sigma(A_1)\subset(-\infty,\alpha]}$ and
${\sigma(A_2)\subset[\beta,\infty).}$ Hence the sets $\sigma(A_1)$ and
$\sigma(A_2)$ are disjoint closed subsets of $\sigma(A)$ separated by the
interval $(\alpha,\beta)$, so $A$ has a spectral gap.\
\end{proof}

\begin{theorem}\label{th:4.3}
Let $\H$ be a Hilbert space, let ${T\in\BB(\H)}$ be an arbitrary operator, and
let ${A\in\BB(\H)}$ be a positive invertible operator having a spectral gap.\
If the family $\{A^{-n}TA^n\}_{n\in\ZZ}$ is bounded in the operator norm, then
${T=T_1\oplus T_2}$ is block diagonal with respect to the orthogonal
decomposition ${\H=\H_1\oplus\H_2}$ given in Lemma~\ref{lem:4.2}.
\end{theorem}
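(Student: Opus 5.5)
Write $A=A_1\oplus A_2$ with respect to $\H=\H_1\oplus\H_2$ from Lemma~\ref{lem:4.2}. Here $A_1\le\alpha I_{\H_1}$ and $A_2\ge\beta I_{\H_2}$ with $0<\alpha<\beta$; we may take $\alpha>0$ because $A$ is positive and invertible. Write $T$ as a $2\times2$ operator matrix $[T_{ij}]$ with $T_{ij}\colon\H_j\to\H_i$. Since $\H_1,\H_2$ reduce $A$, powers act blockwise, so
\[
A^{-n}TA^n=\begin{bmatrix} A_1^{-n}T_{11}A_1^n & A_1^{-n}T_{12}A_2^n\\ A_2^{-n}T_{21}A_1^n & A_2^{-n}T_{22}A_2^n\end{bmatrix},\qquad n\in\ZZ.
\]
Each block of an operator matrix has norm at most the norm of the whole operator (compress by the orthogonal projections). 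Hence boundedness of the family forces $\sup_{n\in\ZZ}\|A_1^{-n}T_{12}A_2^n\|<\infty$ and $\sup_{n\in\ZZ}\|A_2^{-n}T_{21}A_1^n\|<\infty$. The goal is to show $T_{12}=0$ and $T_{21}=0$.

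For $T_{12}$, use positive $n$ and undo the outer factors:
\[
T_{12}=A_1^{n}\bigl(A_1^{-n}T_{12}A_2^{n}\bigr)A_2^{-n}.
\]
Since $A_1,A_2$ are positive, functional calculus gives $\|A_1^n\|\le\alpha^n$ and $\|A_2^{-n}\|\le\beta^{-n}$. Therefore
\[
\|T_{12}\|\le (\alpha/\beta)^n\, \sup_{m}\|A_1^{-m}T_{12}A_2^m\|,
\]
and this tends to $0$ as $n\to\infty$, because $\alpha/\beta<1$.

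For $T_{21}$, use negative exponents. Setting $m=-n>0$,
\[
T_{21}=A_2^{-m}\bigl(A_2^{m}T_{21}A_1^{-m}\bigr)A_1^{m},
\]
so $\|T_{21}\|\le(\alpha/\beta)^m\sup_n\|A_2^{-n}T_{21}A_1^n\|\to0$. This is exactly where the two-sided index set $\ZZ$ is needed; the example in the introduction shows that one side alone does not suffice. We conclude $T=T_{11}\oplus T_{22}$.

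The only delicate point is the norm estimates $\|A_1^n\|\le\alpha^n$ and $\|A_2^{-n}\|\le\beta^{-n}$. These follow from $\sigma(A_1)\subset(0,\alpha]$ and $\sigma(A_2)\subset[\beta,\infty)$, since for positive operators the norm equals the spectral radius. Everything else is bookkeeping with block matrices.
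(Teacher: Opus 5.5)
Your proof is correct. Its skeleton (block decomposition with respect to $\H=\H_1\oplus\H_2$, then controlling the off-diagonal corners of $A^{-n}TA^n$ by $M=\sup_n\|A^{-n}TA^n\|$) is the paper's, but you carry out the key estimate differently.

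The paper argues by contradiction. Assuming $T_{12}\neq0$, it picks a unit vector $y$ with $\|T_{12}y\|>\|T_{12}\|/2$. It then tests on the normalized vectors $x_n=A_2^{-n}y/\|A_2^{-n}y\|$ and obtains $\|A_1^{-n}T_{12}A_2^n\|\ge\frac{\|T_{12}\|}{2}(\beta/\alpha)^n$. It disposes of $T_{21}$ by ``repeating the argument.''

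You instead factor $T_{12}=A_1^n(A_1^{-n}T_{12}A_2^n)A_2^{-n}$ and bound it from above. This is the device the paper itself uses later in the proof of Theorem~\ref{th:bounded-orbits}, via $P_{\Omega_2}TP_{\Omega_1}=P_{\Omega_2}A^{-n}(A^nTA^{-n})A^nP_{\Omega_1}$. Your route buys three things:
\begin{itemize}
\item It needs no near-norming vector and no contradiction.
\item Read the other way, your inequality gives $\|A_1^{-n}T_{12}A_2^n\|\ge(\beta/\alpha)^n\|T_{12}\|$ for $n\ge0$. This is the paper's growth bound with constant $1$ instead of $1/2$, so the vector argument offers nothing extra.
\item It makes explicit which sign of $n$ kills which corner: $n\to+\infty$ for $T_{12}$, $n\to-\infty$ for $T_{21}$.
\end{itemize}
The last point is worth having. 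The paper's write-up states its final limit as $\lim_{n\to-\infty}(\beta/\alpha)^n=0$. But its inequalities $A_2^{-n}\le\beta^{-n}I$ and $A_1^{-2n}\ge\alpha^{-2n}I$ hold only for $n\ge0$. The contradiction actually comes from $(\beta/\alpha)^n\to\infty$ as $n\to+\infty$.
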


\begin{proof}
Let us adopt the notation from the proof of Lemma~\ref{lem:4.2} and prove the
nontrivial implication.\ With respect to the orthogonal decomposition
${\H=\H_1\oplus\H_2}$, we represent the operators $T$ and $S$ in their block
forms as
\[
T=\begin{bmatrix} T_{11} & T_{12} \\ T_{21} & T_{22} \end{bmatrix}
\quad\hbox{and}\quad
A=\begin{bmatrix} A_1 & 0 \\ 0 & A_2 \end{bmatrix},
\]
where ${T_{ij}=\pi_i T\iota_j\in\BB(\H_j,\H_i)}.$ Here
${\pi_j\!:\H_1\oplus\H_2\to\H_j}$ is the natural projection and
${\iota_j\!:\H_j\to\H_1\oplus\H_2}$ is the natural embedding for ${j=1,2}$;
see \cite[Lemma 2.1.2]{MO}$.$ Let ${A_i=A|_{\H_i}}\!$ be the restriction of $A$
to $\H_i$, which is an invertible positive operator on $\H_i$ for ${i=1,2}$
due to ${\sigma(A)=\sigma(A_1)\cup\sigma(A_2)}$, where
${\sigma(A_1)\!\subseteq(0,\alpha]}$ and
${\sigma(A_2)\!\subseteq[\beta,\|A\|]}$~with~${\alpha\!<\!\beta}$.

Our goal is to show that ${T_{12}=0} $ and ${T_{21}=0}$.

According to the hypothesis, ${\|A^{-n}T A^n\|\leq M}$ for some ${M<\infty}$
for all ${n\in\ZZ.}$ It follows from
\[
A^{-n}TA^n
=\begin{bmatrix}
A_1^{-n}T_{11}A_1^n\; & \;A_1^{-n}T_{12}A_2^n \\
A_2^{-n}T_{21}A_1^n\; & \:A_2^{-n}T_{22}A_2^n
\end{bmatrix},
\]
and \cite[Lemma 2.1.2]{MO} that
$$
\|A_1^{-n}T_{12}A_2^n\|\le\|A^{-n}TA^n\|\leq M
\quad \mbox{for all}\quad
n\in\ZZ.                                                \eqno{(2.1)}
$$

To reach a contradiction, assume that ${T_{12}\ne0}.$ By the definition of
the operator norm, there exists a unit vector ${y\in\H_2}$ such that
${\|T_{12}y\|\!>\!\frac{\|T_{12}\|}{2}}.$ \hbox{For each ${n\in\ZZ}$, we}
normalise the vector ${A_2^{-n}y}$ by considering
${x_n\!=\!\frac{A_2^{-n}y}{\|A_2^{-n}y\|}\!\in\!\H_2}.$ Hence,
${A_2^nx_n\!=\!\frac{y}{\|A_2^{-n}y\|}}\!.$

On $\H_2$, we have ${A_2\ge\beta I_{H_2}>0}$, and so
${0\le A_2^{-n}\le\beta^{-n}I_{\H_2}}.$ This yields
\[
\|A_2^nx_n\|
=\frac{\|y\|}{\|A_2^{-n} y\|}
=\frac{1}{\|A_2^{-n}y\|}
\ge\frac{1}{\|A_2^{-n}\|}
\ge\beta^{\,n},
\]
whence
\[
\|T_{12}A_2^n x_n\|
=\frac{\|T_{12}y\|}{\|A_2^{-n}y\|}
\ge\frac{\|T_{12}\|}{2}\beta^{\,n}.
\]

On $\H_1$, we have ${A_1\!\le\alpha I_{\H_1}}$, and so
${A_1^{-2n}\!\ge\alpha^{-2n}I_{\H_1}}.$ For an arbitrary vector ${z\in\H_1}$,
we can write
\[
\langle A_1^{-n}z,A_1^{-n}z\rangle
=\langle A_1^{-2n}z,z\rangle\ge\langle\alpha^{-2n}z,z\rangle
=\langle\alpha^{-n}z,\alpha^{-n}z\rangle,
\]
and derive ${\|A_1^{-n}z\|\ge\alpha^{-n}\|z\|}.$ Taking ${z=T_{12}A_2^nx_n}$,
we infer that
\[
\|A_1^{-n}T_{12}A_2^n\|
\ge\|A_1^{-n}T_{12}A_2^nx_n\|
\ge\alpha^{-n}\|T_{12}A_2^n x_n\|
\ge\alpha^{-n}\frac{\|T_{12}\|}{2}\beta^n
=\frac{\|T_{12}\|}{2}\Big(\frac{\beta}{\alpha}\Big)^n\!.
\]
Since ${\beta/\alpha>1}$, we have ${\lim_{n\to -\infty}(\beta/\alpha)^n=0}$,
which contradicts (2.1).\ Therefore, ${T_{12}=0}.$ Repeating the
argument we arrive at ${T_{21}=0}.$ Thus,
\[
T=\begin{bmatrix} T_{11} & 0 \\ 0 & T_{22} \end{bmatrix}
=T_{11}\oplus T_{22}
\]
is block diagonal.
\end{proof}

In his result, Radjabalipour decomposes the Hilbert space into infinitely many spectral pieces and proves range inclusions for half-line projections, and then uses operator matrix techniques with infinite matrices and unilateral shift estimates. Now, we give an alternative proof of Theorem~\ref{thm:1.1}, in which we work with the full spectral measure and prove commutation with all spectral projections at once, and then utilize the monotone class theorem to extend from half-lines to all Borel sets. Our proof uses an idea similar to that of spectral gaps.

Recall that a monotone class is a nonempty family of subsets of a given set that is closed under countable increasing unions and countable decreasing intersections, and a $\pi$-system is a nonempty family of sets that is closed under finite intersections. The monotone class theorem states that if $\mathcal{P}$ is a $\pi$-system and $\mathcal{C}$ is a monotone class containing $\mathcal{P}$, then $\mathcal{C}$ contains the $\sigma$-algebra generated by $\mathcal{P}$; see \cite[Section 1.3]{LIE}.

\begin{theorem}\label{th:bounded-orbits}
Let $\mathcal H$ be a Hilbert space and let $A\in\mathbb B(\mathcal H)$ be positive and invertible.
For $T\in\mathbb B(\mathcal H)$, the following conditions are equivalent:
\begin{enumerate}
\item $\displaystyle \sup_{n\in\mathbb Z}\|A^{-n}TA^n\|<\infty$.
\item $AT=TA$.
\end{enumerate}
\end{theorem}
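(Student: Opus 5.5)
The implication (2)$\Rightarrow$(1) is immediate: if $AT=TA$ then $A^{-n}TA^n=T$ for every $n\in\ZZ$, so the supremum equals $\|T\|$. For (1)$\Rightarrow$(2), let $E$ be the spectral measure of $A$, supported on $\sigma(A)\subseteq[m,\|A\|]$ with $m>0$. The plan is to show that $T$ commutes with $E(\Delta)$ for every Borel set $\Delta\subseteq\RR$. Since $A=\int\lambda\,dE(\lambda)$, this gives $AT=TA$, because an operator commuting with all spectral projections commutes with every bounded Borel function of $A$.

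\emph{Step 1 (half-lines).} Fix real numbers $\alpha<\beta$, and put $P=E((-\infty,\alpha])$ and $Q=E([\beta,\infty))$. I claim that $PTQ=0$ and $QTP=0$. This is the argument of Theorem~\ref{th:4.3}, run without assuming a genuine spectral gap. The ranges of $P$ and $Q$ reduce $A$, and $A|_{P\H}\le\alpha I$, $A|_{Q\H}\ge\beta I$ hold whenever these ranges are nonzero. Moreover,
\[
\|(A|_{P\H})^{-n}\,PTQ\,(A|_{Q\H})^{n}\|=\|PA^{-n}TA^nQ\|\le\sup_k\|A^{-k}TA^k\|.
\]
The estimates in the proof of Theorem~\ref{th:4.3} then give a lower bound $\frac{\|PTQ\|}{2}(\beta/\alpha)^n$ for this quantity when $PTQ\neq0$, with $\beta/\alpha>1$ since $\alpha$ may be taken positive (if $\alpha\le0$ then $P=0$). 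Letting $n\to+\infty$ gives a contradiction with the boundedness in (1), so $PTQ=0$. The case $QTP$ follows symmetrically, using $n\to-\infty$.

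\emph{Step 2 (commuting with half-line projections).} Let $P_\lambda=E((-\infty,\lambda])$. I want $P_\lambda T(I-P_\lambda)=0$, where $I-P_\lambda=E((\lambda,\infty))$. Write $E((\lambda,\infty))$ as the strong limit of $E([\lambda+1/k,\infty))$ as $k\to\infty$. Step 1 gives $P_\lambda TE([\lambda+1/k,\infty))=0$ for each $k$, and passing to the strong limit yields $P_\lambda T(I-P_\lambda)=0$. In the same way, $(I-P_\lambda)TP_\lambda=0$. Together these give $P_\lambda T=P_\lambda TP_\lambda=TP_\lambda$.

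\emph{Step 3 (monotone class).} Let $\mathcal C$ be the family of Borel sets $\Delta$ with $E(\Delta)T=TE(\Delta)$. The half-lines $(-\infty,\lambda]$, together with $\emptyset$, form a $\pi$-system that generates the Borel $\sigma$-algebra. Countable increasing unions and decreasing intersections of sets in $\mathcal C$ stay in $\mathcal C$, because $E$ is strongly countably additive and multiplication is strongly continuous on bounded sets. Hence $\mathcal C$ is a monotone class. I also need complements or differences, which I would handle by noting that $\mathcal C$ is closed under complements ($E(\Delta^c)=I-E(\Delta)$) and under proper differences. Then Dynkin's $\pi$–$\lambda$ form of the monotone class theorem applies, and $\mathcal C$ is all Borel sets.

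Finally, approximate $A$ in norm by finite sums $\sum\lambda_iE(\Delta_i)$; each of these commutes with $T$, so $AT=TA$. I expect the main obstacle to be the strong-limit bookkeeping in Step 2 at boundary points, in particular atoms at $\lambda$. Closedness versus openness of the half-lines matters there, and getting it right is what keeps the sets in Step 1 genuinely separated, with $\alpha<\beta$.
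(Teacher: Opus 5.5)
Your proposal is correct and follows essentially the same route as the paper's proof. The separated spectral pieces $E((-\infty,\alpha])$ and $E([\beta,\infty))$ are decoupled by the $(\beta/\alpha)^n$ estimate; the paper gets this more directly from $P_{\Omega_2}TP_{\Omega_1}=P_{\Omega_2}A^{-n}(A^nTA^{-n})A^nP_{\Omega_1}$ rather than via the vector argument of Theorem~\ref{th:4.3}. The half-line projections are then handled by the same strong limit, and a monotone-class argument finishes.

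Your Step~3 is more careful than the paper, which cites a ``$\pi$-system plus monotone class'' form of the monotone class theorem that is false as stated. Your observation that $\mathcal C$ is closed under complements and proper differences is what makes that step rigorous, since it lets Dynkin's $\pi$--$\lambda$ theorem apply.
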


\begin{proof}
We may assume that $A$ is not a scalar multiple of the identity. Assume that $M:=\sup_{n\in\mathbb Z}\|A^{-n}TA^n\|<\infty$. Let $E:\mathcal B(\sigma(A))\longrightarrow \mathbb B(\mathcal H)$ be the spectral measure of the positive invertible operator $A$. Thus, $A=\int_{\sigma(A)}\lambda\,dE(\lambda)$. Since $A$ is positive and invertible, there exist constants $0<a<b<\infty$ such that $\sigma(A)\subseteq [a,b]$.

For every Borel set $\Omega\subseteq\sigma(A)$, put $P_\Omega=E(\Omega)$. Each $P_\Omega$ commutes with $A$ and hence with every integer power $A^n$.

We first prove that spectral projections corresponding to separated Borel sets are orthogonal through $T$. Let $\Omega_1,\Omega_2\subseteq\sigma(A)$ be Borel sets such that $\sup\Omega_1<\inf\Omega_2$. Set $\beta=\sup\Omega_1$ and $\alpha=\inf\Omega_2$. Then $0<a\leq \beta<\alpha\leq b$.

For every $n\geq0$, using the commutation of spectral projections with powers of $A$, we have
\[
P_{\Omega_2}TP_{\Omega_1}
= P_{\Omega_2}A^{-n}(A^nTA^{-n})A^nP_{\Omega_1}.
\]
Therefore,
\[
\|P_{\Omega_2}TP_{\Omega_1}\|
\leq \|P_{\Omega_2}A^{-n}\|\,\|A^nTA^{-n}\|\,\|A^nP_{\Omega_1}\| .
\]
Because the spectrum of $A$ restricted to $P_{\Omega_2}\mathcal H$ is contained in $[\alpha,b]$, and the spectrum restricted to $P_{\Omega_1}\mathcal H$ is contained in $[a,\beta]$, we get $\|P_{\Omega_2}A^{-n}\|\leq \alpha^{-n}$ and $\|A^nP_{\Omega_1}\|\leq \beta^n$. Consequently,
\[
\|P_{\Omega_2}TP_{\Omega_1}\|
\leq M\left(\frac{\beta}{\alpha}\right)^n .
\]
Since $\beta/\alpha<1$, letting $n\to\infty$ gives $P_{\Omega_2}TP_{\Omega_1}=0$.

Applying a similar argument to $A^{-1}$ (equivalently, replacing $n$ by $-n$ in the above estimate) yields $P_{\Omega_1}TP_{\Omega_2}=0$. Hence, whenever two Borel subsets of $\sigma(A)$ are separated by a positive distance, the corresponding off-diagonal terms of $T$ vanish.

Now fix $t\in\mathbb R$ and define $P_t=E((-\infty,t]\cap\sigma(A))$. We claim that $TP_t=P_tT$. For $\varepsilon>0$, put $\Omega_1=(-\infty,t]\cap\sigma(A)$ and $\Omega_2=[t+\varepsilon,\infty)\cap\sigma(A)$. These sets are separated, and therefore $E([t+\varepsilon,\infty)\cap\sigma(A))TP_t=0$ and $P_tTE([t+\varepsilon,\infty)\cap\sigma(A))=0$. As $\varepsilon\downarrow0$,
\[
E([t+\varepsilon,\infty)\cap\sigma(A))
\xrightarrow{\mathrm{SOT}}
E((t,\infty)\cap\sigma(A)).
\]
Hence, $E((t,\infty)\cap\sigma(A))TP_t=0$ and $P_tTE((t,\infty)\cap\sigma(A))=0$. Since $P_t+E((t,\infty)\cap\sigma(A))=I$, we obtain
\[
TP_t
=(P_t+E((t,\infty)\cap\sigma(A)))TP_t
=P_tTP_t,
\]
and
\[
P_tT
=P_tT(P_t+E((t,\infty)\cap\sigma(A)))
=P_tTP_t.
\]
Thus, $TP_t=P_tT$ for every $t\in\mathbb R$.

Now define $\mathcal C=\{\Omega\in\mathcal B(\sigma(A)):TE(\Omega)=E(\Omega)T\}$. We show that $\mathcal C$ is a monotone class. Suppose that $\Omega_n\uparrow\Omega$ with $\Omega_n\in\mathcal C$. The spectral theorem gives $E(\Omega_n)\xrightarrow{\mathrm{SOT}}E(\Omega)$. For every $\xi\in\mathcal H$, $TE(\Omega_n)\xi\longrightarrow TE(\Omega)\xi$. Moreover, $TE(\Omega_n)\xi=E(\Omega_n)T\xi\longrightarrow E(\Omega)T\xi$. Hence, $TE(\Omega)\xi=E(\Omega)T\xi$, and therefore $TE(\Omega)=E(\Omega)T$. The decreasing case follows in the same way. Thus $\mathcal C$ is a monotone class.

The family $\mathcal P=\{(-\infty,t]\cap\sigma(A):t\in\mathbb R\}$ is a $\pi$-system generating $\mathcal B(\sigma(A))$ since $\mathcal{B}(\mathbb{R}$ is $\sigma$-algebra generated by $\mathcal P=\{(-\infty,t]:t\in\mathbb R\}$. We have already proved that $\mathcal P\subseteq\mathcal C$. Therefore, by the monotone class theorem, $\mathcal C=\mathcal B(\sigma(A))$. Consequently, $TE(\Omega)=E(\Omega)T$ for every Borel set $\Omega\subseteq\sigma(A)$.

Hence $T$ commutes with the spectral measure of $A$. Using the spectral representation $A=\int_{\sigma(A)}\lambda\,dE(\lambda)$, we conclude that $TA=AT$.

Conversely, if $AT=TA$, then $A^{-n}TA^n=T$ for all $n\in\mathbb Z$, and so $\sup_{n\in\mathbb Z}\|A^{-n}TA^n\|=\|T\|<\infty$. 
\end{proof}
\section{Extensions to Several Rich Classes of Operators}\label{sec:extension}

In this section, we extend Theorem~\ref{thm:1.1} from
positive operators to normal \hbox{operators}.\

\begin{proposition}\label{t2}
Let ${A\in\mathbb{B}(\H)}$ be an invertible normal operator and let
$T\in\BB(\H)$ be an arbitrary operator. Then
\begin{itemize}
\item[\rm(a)]
${T|A|=|A|T}$ if and only if ${{\sup}_{n\in\ZZ}\|A^{-n}TA^n\|<\infty}$;
\item[\rm(b)]
if ${\sup_{n\in\mathbb{Z}}\|A^{-n}TA^n\|<\infty}$, then $TA$ is unitarily
equivalent to $A\,T$.
\end{itemize}
\end{proposition}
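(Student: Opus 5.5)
The plan is to reduce everything to Theorem~\ref{thm:1.1} via the polar decomposition of $A$.

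Since $A$ is normal and invertible, $A=U|A|$, where $|A|=(A^*A)^{1/2}$ is positive and invertible. Moreover $U=A|A|^{-1}$ is unitary, and $U$ commutes with $|A|$, because both are Borel functions of $A$ in its functional calculus. Consequently, for every $n\in\ZZ$,
\[
A^n=U^n|A|^n=|A|^nU^n
\qquad\text{and}\qquad
A^{-n}=U^{-n}|A|^{-n}.
\]

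For (a), I will combine these identities. Using $A^{-n}=U^{-n}|A|^{-n}$ and $A^n=|A|^nU^n$ gives
\[
A^{-n}TA^n=U^{-n}\bigl(|A|^{-n}T|A|^n\bigr)U^n .
\]
Since $U^{\pm n}$ are unitary,
\[
\|A^{-n}TA^n\|=\bigl\||A|^{-n}T|A|^n\bigr\|\qquad\text{for all } n\in\ZZ.
\]
Hence $\sup_{n}\|A^{-n}TA^n\|<\infty$ if and only if $\sup_n\||A|^{-n}T|A|^n\|<\infty$. By Theorem~\ref{thm:1.1} (equivalently Theorem~\ref{th:bounded-orbits}) applied to the invertible positive operator $|A|$, this holds if and only if $T|A|=|A|T$.

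For (b), assume the supremum is finite, so that $T|A|=|A|T$ by (a). Put $X=T|A|=|A|T$. Then
\[
AT=U|A|T=UX
\qquad\text{and}\qquad
TA=T|A|U=XU,
\]
where the second identity uses $A=|A|U$. Therefore
\[
TA=XU=U^*(UX)U=U^*(AT)U,
\]
so $TA$ and $AT$ are unitarily equivalent via $U$.

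The only step needing care is the commutation of $U$ with $|A|$, on which both the norm identity and the rewriting $A=|A|U$ rely. This is exactly where normality is used: for normal $A$, both $U$ and $|A|$ are functions of $A$ in its functional calculus, so they commute. Everything else is direct algebra.
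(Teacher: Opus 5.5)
Your proposal is correct and follows the paper's proof essentially step for step. It uses the polar decomposition $A=U|A|=|A|U$ with $U$ unitary, the norm identity $\|A^{-n}TA^n\|=\||A|^{-n}T|A|^n\|$ reducing (a) to Theorem~\ref{thm:1.1} for $|A|$, and $TA=U^*(AT)U$ for (b). The only cosmetic difference is that you justify $U|A|=|A|U$ through the functional calculus of the normal operator $A$, whereas the paper cites the characterization of quasinormal operators.
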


\begin{proof}
First, recall that $A$ is quasinormal if and only if ${(A^*A-AA^*)A=0}.$ Then
every normal operator is quasinormal, and every invertible quasinormal operator
is normal, so that invertible normal means invertible quasinormal.\ Moreover,
the partial isometry of the polar decomposition of an invertible operator is a
unitary operator (see, e.g., \cite[Corollary 2, p. 75]{HAL}).\ Furthermore, the
partial isometry of the polar decomposition of an operator $A$ commutes with
$|A|$ if and only if $A$ is quasinormal (see, e.g.,
\cite[Problem 137, p. 75, and Solution 137, p. 262]{HAL}).

Let $A$ be an invertible normal operator.\ Take its polar decomposition
${A=U|A|}=|A|U$ with ${U\!\in\BB(\H)}$ unitary and $|A|$ invertible with
${|A|^{-1}=A^{-1}U=U A^{-1}}$.\ Hence,
\[
A^n=U^n|A|^n=|A|^nU^n
\quad\hbox{and so}\quad
|A|^n=U^{-n}A^n=A^nU^{-n}
\]
for every ${n\in\mathbb{Z}}$. Thus, $\{A^{-n}TA^n\}$ and $\{|A|^{-n}T|A|^n\}$
are bounded together because
$$
\|A^{-n}TA^n\|=\|U^{*n}|A|^{-n}T|A|^nU^n\|=\|\,|A|^{-n}T|A|^n\|
\quad\hbox{for every}\quad
n\in\ZZ.                                                  \eqno{(*)}
$$

(a) If ${|A|T=T|A|}$, then ${|A|^{-1}T=T|A|^{-1}}$, so that
${|A|^{n}T=T|A|^{n}}$ and hence $T={|A|^{-n}T|A|^{n}}$ for every
${n\in\mathbb{Z}}.$ Therefore, $\{A^{-n}TA^n\}$ is bounded by $(*).$

Conversely, if $\{A^{-n}TA^n\}$ is bounded, then so is $\{|A|^{-n}T|A|^n\}$ by
$(*).$ Since~the positive operator $\kern-1pt|A|$ is invertible, it follows by
$\kern-1pt$Theorem~\ref{thm:1.1} (i.e., by \cite[\hbox{Corollary}~3]{RAD})
that ${|A|T=T|A|}.$

(b) Recall:\ ${|A|=U^{-1}A=A U^{-1}}$. If ${|A|T=T|A|}$, then ${UTA=A TU}$
so that $TA$ is unitarily equivalent to $A\,T$, and the claimed result comes
from item (a).
\end{proof}

\begin{remark}
It follows from Proposition \ref{t2} that an invertible operator $T$ is
quasinormal if and only if the sequence $\{(T^*T)^{-n}T(T^*T)^n\}$ is bounded.\
\end{remark}

One of the most powerful tools for simplifying problems with two operators is
to employ $2\times 2$ block matrices of operators; see \cite{MO, TIA}.\

\begin{corollary}
Suppose that ${A,B\kern1pt\in\kern1pt\BB(\H)}$ are invertible normal operators
and ${T\in\BB(\H)}$ is an arbitrary operator.\ Then ${|A|T=T|B|}$ if and only
if both sequences $\{A^{-n}TB^n\}_{n\in\ZZ}$ and $\{B^{-n}T^*A^n\}_{n\in\ZZ}$
are bounded.\
\end{corollary}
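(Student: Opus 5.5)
The corollary follows from Proposition~\ref{t2}(a) applied to $2\times2$ block operators on $\H\oplus\H$. Set
\[
\widetilde A=\begin{bmatrix} A & 0\\ 0 & B\end{bmatrix},\qquad
\widetilde T=\begin{bmatrix} 0 & T\\ T^* & 0\end{bmatrix}.
\]
Then $\widetilde A$ is an invertible normal operator on $\H\oplus\H$, and its modulus is $|\widetilde A|=|A|\oplus|B|$. A direct computation gives, for every $n\in\ZZ$,
\[
\widetilde A^{-n}\widetilde T\widetilde A^{n}
=\begin{bmatrix} 0 & A^{-n}TB^{n}\\ B^{-n}T^*A^{n} & 0\end{bmatrix}.
\]
The norm of an off-diagonal block operator equals the maximum of the norms of its two entries (cf.\ \cite[Lemma 2.1.2]{MO}). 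Hence $\{\widetilde A^{-n}\widetilde T\widetilde A^{n}\}_{n\in\ZZ}$ is bounded exactly when both $\{A^{-n}TB^n\}_{n\in\ZZ}$ and $\{B^{-n}T^*A^n\}_{n\in\ZZ}$ are bounded.

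By Proposition~\ref{t2}(a), this boundedness is equivalent to $|\widetilde A|\widetilde T=\widetilde T|\widetilde A|$. Writing out both products,
\[
|\widetilde A|\widetilde T=\begin{bmatrix} 0 & |A|T\\ |B|T^* & 0\end{bmatrix},\qquad
\widetilde T|\widetilde A|=\begin{bmatrix} 0 & T|B|\\ T^*|A| & 0\end{bmatrix}.
\]
So the commutation holds if and only if $|A|T=T|B|$ and $|B|T^*=T^*|A|$. Since $|A|$ and $|B|$ are self-adjoint, the second identity is the adjoint of the first, so the two conditions together are equivalent to $|A|T=T|B|$ alone. This proves the corollary.

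The only points to check are that $\widetilde A$ really is invertible and normal with $|\widetilde A|=|A|\oplus|B|$, and the block-norm identity for off-diagonal operators. Both are routine; the key step is choosing the self-adjoint-type dilation $\widetilde T$, which is why the hypothesis must involve both sequences rather than just $\{A^{-n}TB^n\}$.
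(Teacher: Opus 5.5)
Your proof is correct and is essentially the paper's argument: the same dilation $A\oplus B$ with off-diagonal entries $T,T^*$ and the same max-norm identity. Your appeal to Proposition~\ref{t2}(a) is in fact the right reference, since the paper cites Theorem~\ref{thm:1.1}, which is stated for positive operators, while $A\oplus B$ is only normal. One correction to your closing aside: for normal $A,B$ one has $\|B^{-n}T^*A^n\|=\|A^{n}TB^{-n}\|$ (take adjoints and strip the commuting unitary polar factors), so over $n\in\ZZ$ the two sequences are bounded together and the second hypothesis is redundant rather than essential.
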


\begin{proof}
Set
${\,C=\begin{bmatrix}A & 0 \\ 0 & B\end{bmatrix}}$
and
${\,S=\begin{bmatrix}0 & T \\ T^* & 0\end{bmatrix}}.$
Then
${\,|C|=\begin{bmatrix}|A| & 0 \\ 0 & |B|\end{bmatrix}}.$
Therefore, ${|A|T=T|B|}$ if and only if ${S|C|=|C|S}.$
\vskip2pt
On the other hand, for any $n\in\NN$, we have
\begin{align*}
\left\|C^{-n}SC^n\right\|
&=
\left\|\begin{bmatrix}0 & A^{-n}TB^n \\ B^{-n}T^*A^n & 0\end{bmatrix}\right\|\\
&=\left\|\begin{bmatrix}0 & A^{-n}TB^n \\ B^{-n}T^*A^n & 0\end{bmatrix}
\begin{bmatrix}0 & I \\ I & 0\end{bmatrix} \right\|\\
&\qquad\qquad(\text{since the operator norm is a unitarily invariant norm})\\
&=\left\|\begin{bmatrix}A^{-n}TB^n &0\\ 0& B^{-n}T^*A^n\end{bmatrix}\right\|\\
&=\max\left\{\left\|A^{-n}TB^n\right\|, \left\|B^{-n}T^*A^n\right\|\right\}.
\end{align*}
The proof is completed by using Theorem \ref{thm:1.1} with $C$ and $S$ instead
of $A$ and $T.$
\end{proof}

The following theorem extends Radjabalipour's result to the rich class of
normal operators having pure point spectrum.\ It includes the case of normal
operators acting on finite-dimensional Hilbert spaces, that is, normal
matrices.\

\begin{theorem}
Let $\H$ be a Hilbert space, and let ${Ax=\sum_{j\in J}\lambda_jP_jx}$ for
${x\in\H}$ be an invertible weighted sum of projections, where
${\{P_j\}_{j\in J}}$ is a resolution of the identity on $\H$ and
$\{\lambda_j\}_{j\in J}$ is a bounded $($above and below$)$ family of complex
numbers.\ Let ${T\in\BB(\H)}$ be a bounded linear operator.\ If the sequence
$\{A^{-n}T A^n\}_{n\in\ZZ}$ is bounded in the operator norm, then the operator
$|A|$ commutes with $T.$
\end{theorem}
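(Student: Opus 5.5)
The plan is to reduce the statement to Proposition~\ref{t2}(a). First I will check that $A$ is an invertible normal operator. Since $\{P_j\}_{j\in J}$ is a resolution of the identity, the projections $P_j$ are mutually orthogonal and $\sum_j P_j x = x$ (unconditional convergence). Because $\{\lambda_j\}$ is bounded, the series $Ax=\sum_j\lambda_jP_jx$ defines a bounded operator with $\|A\|\le\sup_j|\lambda_j|$. Its adjoint is $A^*x=\sum_j\overline{\lambda_j}P_jx$, and a direct computation on each range $P_j\H$ gives
\[
A^*A=AA^*=\sum_j|\lambda_j|^2P_j.
\]
Hence $A$ is normal.

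Next comes invertibility. "Bounded below" means $\inf_j|\lambda_j|=\delta>0$. Then $B=\sum_j\lambda_j^{-1}P_j$ is bounded with $\|B\|\le\delta^{-1}$, and $AB=BA=I$ because both act as the identity on each $P_j\H$ and these ranges span a dense subspace.

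With normality and invertibility in hand, Proposition~\ref{t2}(a) applies directly. It says that boundedness of $\{A^{-n}TA^n\}_{n\in\ZZ}$ yields $T|A|=|A|T$, which is the claim. For completeness I will also record that $|A|=\sum_j|\lambda_j|P_j$, since $|A|$ is the unique positive square root of $A^*A=\sum_j|\lambda_j|^2P_j$.

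As an alternative self-contained route, avoiding the polar decomposition, I could argue directly. Write $A^n=\sum_j\lambda_j^nP_j$ for $n\in\ZZ$, so that
\[
P_iA^{-n}TA^nP_j=(\lambda_j/\lambda_i)^nP_iTP_j,
\]
and therefore $|\lambda_j/\lambda_i|^n\|P_iTP_j\|\le M$ for all $n\in\ZZ$. If $|\lambda_i|\ne|\lambda_j|$, letting $n\to\pm\infty$ forces $P_iTP_j=0$. Then $P_i|A|TP_j=|\lambda_i|P_iTP_j=|\lambda_j|P_iTP_j=P_iT|A|P_j$ for all $i,j$. Summing over $i,j$ (using strong convergence of $\sum P_i$) gives $|A|T=T|A|$.

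The only step needing care is the convergence bookkeeping, namely that sums indexed by a possibly uncountable $J$ converge strongly and that the identities checked on each $P_j\H$ extend to all of $\H$. That follows from density of the algebraic span of the ranges $P_j\H$ together with boundedness of all operators involved. I expect no genuine obstacle here.
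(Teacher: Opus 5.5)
Your proposal is correct. Your main route differs from the paper's proof of this statement, while your alternative route is essentially the paper's argument in a slightly cleaner form.

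\textbf{The paper's proof.} It does not invoke Proposition~\ref{t2}(a). It argues directly:
\begin{itemize}
\item the polar decomposition $A=U|A|$ with $U|A|=|A|U$ gives $\|A^{-n}TA^n\|=\|\,|A|^{-n}T|A|^n\|$;
\item the $P_j$ with equal $|\lambda_j|$ are regrouped into projections $P'_j$;
\item the off-diagonal blocks $P'_lTP'_k$ ($k\neq l$) vanish because $(|\lambda_k|/|\lambda_l|)^{n}$ is unbounded as $n\to+\infty$ or $n\to-\infty$;
\item hence $T$ is block diagonal with respect to $\bigoplus_{j\in J'}P'_j(\H)$, where $|A|$ is a scalar on each block.
\end{itemize}

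\textbf{Your main route.} You observe, correctly, that a weighted sum of projections with weights bounded above and away from $0$ is an invertible normal operator. The theorem is therefore a special case of Proposition~\ref{t2}(a), which gives the shortest possible proof. The cost is that it relies on Radjabalipour's Theorem~\ref{thm:1.1}, or equivalently on the spectral-measure and monotone-class argument of Theorem~\ref{th:bounded-orbits}. The paper's argument is elementary and self-contained in the pure-point case. It also produces the explicit block-diagonal form of $T$, which is what Theorem~\ref{matrixmox} uses.

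\textbf{Your alternative route.} Computing $P_iA^{-n}TA^nP_j=(\lambda_j/\lambda_i)^nP_iTP_j$ directly with $A$ makes the polar decomposition unnecessary. Checking $P_i(|A|T-T|A|)P_j=0$ pair by pair removes the need to regroup equal-modulus projections into the $P'_j$.
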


\begin{proof}
Let ${A=U|A|}$ be the polar decomposition of $A.$ Since $A$ is invertible, the
partial isometry $U$ is a unitary operator.\ Since $A$ is normal, it is
quasinormal.\ Thus, ${U|A|=|A|U}$ \cite[Problem 137]{HAL} (see also
\cite[Theorem 2.3.2.6]{FUR}).\ Therefore,
\[
A^{-n} T A^n = U^{-n} |A|^{-n} T |A|^n U^n.
\]

Note that $\{P_j\}_{j\in J}$ is a family of mutually orthogonal projections
such that ${P_j P_k=\delta_{jk} P_j}$ and ${\sum_{j\in J}P_j=I}$ in the strong
operator topology.\ Hence, by uniqueness of the square root, and considering
the weighted sum of projections for the inverse of $A$,
\[
|A| = \sum_{j \in J} |\lambda_j| P_j \quad \text{and} \quad |A|^n 
= \sum_{j \in J} |\lambda_j|^n P_j \quad (n \in \ZZ)
\]
in the strong operator topology.\

If some of the $|\lambda_j|$'s are equal, then we can combine the corresponding
$P_j$'s.\ In other words, we can group some blocks.\ So let ${J'\subseteq J}$
be the set of all indices $j$ such that the $|\lambda_j|$'s are distinct, and
for each ${j\in J'}\!$, let $P'_j$ be the sum of all orthogonal projections
$P_i$'s such that ${|\lambda_i|=|\lambda_j|}.$ Then the $P'_j$'s are still
orthogonal projections with sum equal to the identity operator $I.$

Now suppose that ${k,l\in J'}$ are distinct.\ Then
${|\lambda_k|\neq|\lambda_l|}.$ Without loss of generality, assume
${|\lambda_k|>|\lambda_l|}.$
\begin{itemize}
\item [(i)]
For the block $P'_l T P'_k$:
\[
\|\lambda_l^{-n} \lambda_k^n P'_l T P'_k\|
= \|P'_l (|A|^{-n} T |A|^n) P'_k\| \leq \||A|^{-n} T |A|^n\|,
\]
whence we conclude that the sequence $\{\lambda_l^{-n}\lambda_k^n P'_lTP'_k\}$
is bounded in the norm topology.\ Since ${|\lambda_k|>|\lambda_l|}$, we have
${|\lambda_l^{-n}\lambda_k^n|=\bigl(|\lambda_k|/|\lambda_l|\bigr)^n\to\infty}$
as ${n\to+\infty}.$ If ${P'_lTP'_k\neq 0}$, then
\[
\|\lambda_l^{-n} \lambda_k^n P'_l T P'_k\| \to \infty
\]
as ${n\to+\infty}$, contradicting boundedness. Hence, ${P'_lTP'_k=0}.$
\item [(ii)]
For the block ${P'_kTP'_l}$:
\[
\|\lambda_k^{-n} \lambda_l^n P'_k T P'_l\|
= \|P'_k (|A|^{-n} T |A|^n) P'_l\| \leq \||A|^{-n} T |A|^n\|.
\]
Substituting ${m=-n}$, as ${n\to-\infty}$ we have ${m\to+\infty}$ and
\[
|\lambda_k^{-n} \lambda_l^n|
= \bigl(|\lambda_k|/|\lambda_l|\bigr)^m \to \infty.
\]
Thus, if ${P'_kTP'_l\neq0}$, then
${\|\lambda_k^{-n}\lambda_l^nP'_kTP'_l\|\to\infty}$ as ${n\to-\infty}$, again
a contradiction.\ Hence, ${P'_kTP'_l=0}.$
\end{itemize}
Therefore, ${P'_kTP'_l=0}$ for all distinct ${k,l\in J'}\!.$ It follows that
\[
T = \sum_{j \in J'} P'_j T P'_j
\]
in the strong operator topology.\ In other words, with respect to the
orthogonal decomposition
\[
\H = \bigoplus_{j \in J'} P'_j(\H),
\]
the operator ${T={\rm diag}\bigl(P'_jTP'_j\bigr)_{j\in J'}}$ is block
diagonal.\ On the other hand, $|A|={\sum_{j \in J'}\!|\lambda_j|P'_j}\!$ is
diagonal in the same decomposition.\ Thus, $\!|A|$ and $T\!$
commute.\
\end{proof}

If all $|\lambda_j|$'s are distinct, we have ${J=J'}$, and so we
get the following result.\

\begin{theorem}\label{matrixmox}
Let $\H$ be a Hilbert space, and let ${Ax=\sum_{j\in J}\lambda_jP_jx}$ for
${x\in\H}$ be an invertible weighted sum of projections, where
$\{P_j\}_{j\in J}$ is a resolution of the identity on $\H$ and
$\{\lambda_j\}_{j\in J}$ is a bounded family of complex numbers with distinct
moduli.\ Let ${T\in\BB(\H)}$ be a bounded linear operator.\ If the sequence
${\{A^{-n}TA^n\}_{n\in\ZZ}}$ is bounded in the operator norm, then with respect
to the orthogonal decomposition ${\H=\bigoplus_{j\in J}P_j(\H)}$, the operator
$T$ is block diagonal and $A$ commutes with $T.$
\end{theorem}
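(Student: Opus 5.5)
The plan is to derive Theorem~\ref{matrixmox} directly from the preceding theorem, specialised to the case of distinct moduli. Since the $|\lambda_j|$ are pairwise distinct, the grouping step in that proof is vacuous: $J'=J$ and $P'_j=P_j$ for every $j\in J$. The only extra work is to upgrade the conclusion from ``$|A|$ commutes with $T$'' to ``$A$ commutes with $T$''. This upgrade comes for free once $T$ is shown to be block diagonal with respect to $\H=\bigoplus_{j\in J}P_j(\H)$, because $A$ itself is diagonal in that decomposition.

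First, as in the previous proof, I write $A=U|A|=|A|U$ with $U$ unitary, using that $A$ is an invertible normal operator. Then $\|A^{-n}TA^n\|=\||A|^{-n}T|A|^n\|$ for all $n\in\ZZ$, so the family $\{|A|^{-n}T|A|^n\}_{n\in\ZZ}$ is bounded by some $M$. Moreover $|A|^n=\sum_{j\in J}|\lambda_j|^nP_j$ in the strong operator topology. This sum makes sense for negative $n$ because invertibility forces $\inf_j|\lambda_j|>0$.

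Next, for distinct $k,l\in J$ I compress to get
\[
\Bigl(\frac{|\lambda_k|}{|\lambda_l|}\Bigr)^n\|P_lTP_k\|=\|P_l(|A|^{-n}T|A|^n)P_k\|\le M\qquad(n\in\ZZ).
\]
Since $|\lambda_k|\neq|\lambda_l|$, the ratio $|\lambda_k|/|\lambda_l|$ is not $1$. Letting $n\to+\infty$ or $n\to-\infty$, according to whether the ratio exceeds or is less than $1$, the left-hand side would blow up unless $P_lTP_k=0$. Hence all off-diagonal blocks vanish.

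Then $T=\sum_{j}P_jTP_j$ strongly, because $T=\sum_{k}\sum_{l}P_lTP_k$ with only diagonal terms surviving. So $T$ is block diagonal, and in particular $TP_j=P_jT$ for each $j$.

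Finally, $AP_j=\lambda_jP_j$, which gives $ATx=\sum_j\lambda_jP_jTx=\sum_j\lambda_jTP_jx=T\sum_j\lambda_jP_jx=TAx$ for every $x\in\H$. Here the interchange of $T$ with the strongly convergent sum is justified by boundedness of $T$. Thus $AT=TA$.

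I expect no serious obstacle. The only point needing care is this last step: we can pass from commuting with each $P_j$ to commuting with $A$ only because the moduli are distinct, so that the blocks of $|A|$ coincide with the eigenspace blocks of $A$. Without that assumption only $|A|T=T|A|$ follows.
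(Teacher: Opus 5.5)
Your proposal is correct and follows the paper's route. The paper also specialises the preceding theorem, where the grouping step is trivial since $J'=J$, kills the off-diagonal blocks $P_lTP_k$ through the growth of $(|\lambda_k|/|\lambda_l|)^{\pm n}$, and reads off commutation from block-diagonality; you additionally spell out the final passage from $TP_j=P_jT$ to $AT=TA$ for $A=\sum_j\lambda_jP_j$, which the paper leaves implicit.
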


\begin{remark}
A typical example of matrices described in Theorem \ref{matrixmox} is the
matrix ${A=UD\,U^*}$, where $D$ is in full matrix algebra $\MM_N$ of all
complex ${N\times N}$ matrices is a diagonal matrix of the form
${\lambda_1I_1\oplus\cdots\oplus\lambda_kI_k}$, where the $\lambda_i$'s have
distinct moduli and ${I_j\in\MM_{n_j}}$ is the identity matrix for each
${1\leq j\leq k}$ with ${n_1+\cdots+n_k=N}$, and ${U\in\MM_N}$ is a unitary
matrix of the form ${U_1\oplus\cdots\oplus U_k}$ in which
${U_j\in\MM}_{n_j}$ is unitary for each ${1\leq j\leq k}.$ Furthermore, the
eigenvalues of $A$ have distinct moduli if and only if the characteristic
polynomial $p(t)$ of $|A|$ and its derivative $p'(t)$ have no common factor.\
\end{remark}

The next theorem provides a characterization of the cone of positive definite
matrices in the center of $\MM_N.$

\begin{theorem} \label{t3}
A positive definite matrix $A\in \mathbb{M}_N$ is a positive multiple of the
identity matrix if and only if the sequence $\{A^{-n}TA^n\}_{n\in\ZZ}$
is bounded for all ${T\in\MM_N}$.
\end{theorem}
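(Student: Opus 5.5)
The forward direction is immediate. If ${A=cI}$ with ${c>0}$, then ${A^{-n}TA^n=c^{-n}c^nT=T}$ for every ${n\in\ZZ}$. The orbit is therefore the constant sequence $\{T\}$, which is bounded by $\|T\|$ for every ${T\in\MM_N}$.

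For the converse, we plan to use Theorem~\ref{thm:1.1} (equivalently Theorem~\ref{th:bounded-orbits}) together with a commutant argument. Assume $\{A^{-n}TA^n\}_{n\in\ZZ}$ is bounded for every ${T\in\MM_N}$. Since $A$ is positive definite, it is an invertible positive operator on $\CC^N$, so Theorem~\ref{thm:1.1} gives ${TA=AT}$ for every $T$. Hence $A$ lies in the center of $\MM_N$, which is $\CC I$, and we may write ${A=cI}$. Positive definiteness then forces ${c>0}$: for instance, $c$ is an eigenvalue of $A$, or ${c=\langle Ae_1,e_1\rangle>0}$.

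If a self-contained matrix proof is preferred, which fits the abstract's mention of ``simple proofs'', we would argue by contradiction. Diagonalize ${A=U\,\mathrm{diag}(\lambda_1,\dots,\lambda_N)U^*}$ with $U$ unitary and $\lambda_i>0$, and suppose two eigenvalues differ, say ${\lambda_1<\lambda_2}$. Take ${T=UE_{12}U^*}$, where $E_{12}$ is the matrix unit. Then
\[
A^{-n}TA^n=U\bigl(\lambda_1^{-n}\lambda_2^{n}E_{12}\bigr)U^*,
\]
so ${\|A^{-n}TA^n\|=(\lambda_2/\lambda_1)^n\to\infty}$ as ${n\to\infty}$, contradicting boundedness. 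Hence all eigenvalues coincide, so ${A=\lambda_1 UU^*=\lambda_1 I}$ with ${\lambda_1>0}$.

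There is no real obstacle in either route. The only points needing care are to justify that the center of $\MM_N$ is $\CC I$ (a standard fact, also visible by testing commutation against the matrix units $E_{ij}$) and to keep track of positivity of the scalar. We would present the direct eigenvalue argument as the main proof and mention the reduction to Theorem~\ref{thm:1.1} as an alternative.
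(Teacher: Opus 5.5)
Your proof is correct, and the direct eigenvalue argument you plan to present as the main proof is essentially the paper's third proof, and close to its first. The paper diagonalizes $A$ and tests against one $T$ whose entries in the eigenbasis are all nonzero, concluding that every ratio $(\lambda_j/\lambda_i)^n$ stays bounded. You instead test against the single matrix unit $UE_{12}U^*$ for a pair $\lambda_1<\lambda_2$, which uses only $n\to+\infty$.

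Your alternative via Theorem~\ref{thm:1.1} and the fact that the center of $\MM_N$ is $\CC I$ is also valid; it is the same reasoning the paper gives for Corollary~\ref{corcar}. However, it relies on Radjabalipour's theorem, which the paper's elementary proofs avoid, and it needs the full $\ZZ$-indexed orbit. The direct argument also yields the one-sided version of the statement.
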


\begin{proof}
The proof of ($\Longrightarrow$) is clear.\ We now present several proofs for
($\Longleftarrow$).\

\textbf{First proof.}

Set ${a=\min\{\langle Az,z\rangle\!: \|z\|=1\}}$ and
${b=\max\{\langle Az, z\rangle\!: \|z\|=1\}}.$ Therefore, the numerical range
${W(A)=\{\langle Az,z\rangle\!: z\in \CC^N\!,\|z\|=1\}}$ of $A$, which is
convex, is ${[a,b]}\kern-1pt\subset\kern-1pt{(0,\infty)}.$ It is known that
${a,b\in W(A)}$; see \cite[Theorem 2.1.1]{BDMP}.\ Choose unit eigenvectors $x$
and $y$ such that ${Ax=ax}$ and ${Ay=by}.$ Take a matrix $T$ such that
${\langle Ty,x\rangle\neq 0}.$ Since the sequence
\[
\{\langle (A^{-n} T A^n)y, x\rangle\}_{n\in\mathbb{Z}}
=\{\langle T (b^ny), a^{-n}x\rangle\}_{n\in\mathbb{Z}}
\]
is bounded, it follows that the sequence $\left\{(b/a)^n\right\}_{n\in\ZZ}$ is
bounded.\ Hence, ${b\leq a}$, and so ${a=b}.$ Thus, ${A=aI}.$

\textbf{Second proof.}

All norms on the finite-dimensional Hilbert space $\mathbb{C}^N$ are
equivalent.\ Hence, we can assume that $\{A^{-n}TA^n\}_{n\in\ZZ}$ is bounded
in the Frobenius norm $\|\cdot\|_F$, also known as the Hilbert--Schmidt norm.\
Since $A$ is positive definite, there exists an orthonormal basis
${\{z_1,\ldots,z_N\}}$ for $\CC^N$ such that ${Az_j=\lambda_jz_j}$ for
${j=1,\ldots,N}$, where $\lambda_j$'s are the (positive) eigenvalues of $A.$
Evidently, $A^{n}z_j=\lambda_j^n z_j$ and $A^{-n}z_j=\lambda_j^{-n}z_j.$ We
have
\[
\|A^{-n}TA^n\!\|_F^2
=\!\!\sum_{i,j=1}^N\!|\langle A^{-n}TA^nz_i,z_j\rangle|^2\!
=\!\!\sum_{i,j=1}^N\!\lambda_i^{2n}\,\lambda_j^{-2n}
|\langle Tz_i,z_j\rangle|^2\!
\geq\!\big(\smallfrac{\lambda_i}{\lambda_j}\big)^{2n}
|\langle Tz_i,z_j\rangle|^2
\]
for ${1\leq i,j\leq N}.$ Take a matrix $T$ such that all of its entries
represented in the basis ${\{z_1,\ldots,z_N\}}$ are nonzero.\ For a specific
pair $(i,j)$, it follows from the boundedness of the conjugate orbit that, as
${n\to\infty}$, we get ${\lambda_i\leq \lambda_j}$, and as ${n\to-\infty}$, we
get ${\lambda_j\leq\lambda_i}.$ Hence, ${\lambda_i=\lambda_j=\lambda}$ for
some positive real number $\lambda$ and all ${i,j=1,\ldots,N}.$ Therefore,
${A=\lambda I}.$

\textbf{Third proof.}

Without loss of generality, we can assume that
${A=\operatorname{diag}(\lambda_1,\ldots,\lambda_N)}$, where $\lambda_i$'s are
the (positive) eigenvalues of $A.$ If $T\!=[t_{ij}]$, then
${A^{-n}TA^n\!=[(\lambda_j/\lambda_i)^n t_{ij}]}$~for~$n={1,2,\ldots}.$ Taking
a matrix $T$ with nonzero entries, the boundedness of the conjugate orbit
implies that the sequences $\{(\lambda_j/\lambda_i)^n\}_{n\in\ZZ}$ are bounded
for ${i,j=1,2,\ldots,N}.$ This entails that ${\lambda_i=\lambda_j}$ for all
${i,j=1,2,\ldots,N}.$ Hence, ${A=\lambda I}$ for some positive number
$\lambda$.
\end{proof}

\medskip
\noindent \textit{Author Contributions Statement.} All authors wrote, edited, and reviewed the manuscript.

\medskip
\noindent \textit{Conflict of Interest Statement.} On behalf of the authors, the corresponding author states that there is no conflict of interest.

\medskip
\noindent\textit{Data Availability Statement.} Data sharing is not applicable to this article as no datasets were generated or analyzed during the current study.

\medskip
\noindent \textit{Funding Declaration.} This research received no funding.

\medskip
\bibliographystyle{amsplain}

\end{document}